\documentclass[10pt,reqno]{amsart}
\usepackage{amssymb,amscd,url} 
\usepackage[all]{xy}  
\usepackage{color}

\textwidth=14.5cm
\oddsidemargin=0.8cm
\evensidemargin=0.8cm
\topmargin=1cm

\theoremstyle{plain}
\newtheorem{theorem}{Theorem}

\newtheorem{corollary}[theorem]{Corollary}

\newtheorem{claim}[theorem]{Claim}

\theoremstyle{definition}

\begin{document}

\title{On Arveson's Boundary Theorem} 
\author[K.\ Hasegawa]{Kei HASEGAWA$\,^{1}$} 
\address{
$\,^{1}$Department of Mathematics, Kyoto university, Kitashirakawa-Oiwakecho, Sakyo-ku, Kyoto, 606-8502, Japan
}
\email{khasegawa@math.kyoto-u.ac.jp}
\author[Y.\ Ueda]{Yoshimichi UEDA$\,^{2}$}
\address{
$\,^{2}$Graduate School of Mathematics, 
Nagoya University, 
Furocho, Chikusaku, Nagoya, 464-8602, Japan
}
\email{ueda@math.nagoya-u.ac.jp}
\thanks{YU was partly supported by Grant-in-Aid for Scientific Research (C) JP24540214 and (B) JP18H01122. 
KH was partly supported by Grant-in-Aid for JSPS Research Fellow JP18J00453.}
\subjclass[2000]{primary 46L07; secondary 46L52; 47L25; 47A20}

\date{Oct.\ 25th, 2018}
\maketitle
\begin{abstract}
This short note aims to give an insight to Arveson's {\it boundary} theorem by means of non-commutative Poisson {\it boundaries} and its applications. 
\end{abstract}

\allowdisplaybreaks{

\section{Introduction} 

Let $A$ be a subset of a unital $C^*$-algebra $B$ such that $A$ generates $B$ as unital $C^*$-algebra (we denote this by $B = C^*(A)$ in what follows). An irreducible $*$-representation $\pi$ of $B$ is called a {\it boundary representation for $A$} if the $\pi$ itself is only possible UCP extension of the restriction map $\pi\!\upharpoonright_A$ to $B$. The unitary equivalence classes of those (though they have a logical issue) should be regarded as the possible `non-commutative Choquet boundary' for $A$. In his seminal work \cite{Arveson:ActaMath6972}, William Arveson introduced this notion and investigated it in many concrete examples. One of the highlights there is the next theorem together with its application to unitary equivalence between irreducible compact operators.    

\medskip\noindent
{\bf Arveson's boundary theorem.} {\it For any subset $A \subset B(\mathcal{H})$ with $C^*(A)$ irreducible, the following are equivalent:  
\begin{itemize} 
\item[(1)] The identity representation $\mathrm{id}$ of $B(\mathcal{H})$ on $\mathcal{H}$ is only possible UCP extension of the identity map $\mathrm{id}_A$ on $A$ to $B(\mathcal{H})$. 
\item[(2)] The restriction of the quotient map $\pi \colon B(\mathcal{H}) \rightarrow B(\mathcal{H})/K(\mathcal{H})$ by the compact operators $K(\mathcal{H})$ to the linear span $S$ of $A \cup A^*$ is not completely isometric. 
\end{itemize}
Moreover, each of items (1) and (2) is equivalent to
\begin{itemize}
\item[(3)] {\it $K(\mathcal{H}) \subseteq C^*(A)$ and $\mathrm{id}_{C^*(A)}$ is a boundary representation for $A$.} 
\end{itemize}}

\medskip
Another notion whose name contains `boundary' in the non-commutative analysis based on operator algebras is that of non-commutative Poisson boundaries introduced by Masaki Izumi \cite{Izumi:AdvMath02}, and it has no theoretic connection with Arveson's boundary theorem. Nevertheless, this note illustrates how this relatively new notion of boundaries provides a rather elementary and straightforward exposition of Arveson's boundary theorem in a slightly more general framework than itself. The proof below is closer to Arveson's original proof (which uses the second dual $B(\mathcal{H})^{**}$) than Davidson's one \cite{Davidson:PAMS81}, and moreover not `dilation theoretic' (though an example of non-commutative Poisson boundary is given by `Toeplitz dilations', see \cite[Appendix]{Izumi:JOT12}, which was observed by Arveson). Our proof is motivated by \cite[Remark 2.4]{Arveson:MathScand10} and Farenick's exposition \cite{Farenick:LAA11}, both of which deal with only the finite dimensional setting, and ours only needs standard facts (Arveson's extension theorem, Choi's technique of multiplicative domains, and the Choi--Effros $C^*$-algebra structure), all of which can be found in \cite{Paulsen:Book}. 

\medskip\noindent
{\bf Acknowledgements.} 
The second-named author would like to thank Professor Shigeru Yamagami for giving him an opportunity to give an intensive course in Dec.\ 2015, which made the second-named author find an initial observation to this note. The authors would also like to thank Professor Ken Davidson for informing the second-named author of another short proof to Arveson's boundary theorem itself.   

\section{Arveson's Boundary Theorem by Poisson Boundary} 

The main implication of Arveson's boundary theorem is item (2) $\Rightarrow$ item (1), because the converse one is straightforward with the help of Arveson's extension theorem. Thus, we will explain only the main implication in our way. By item (2), $\pi$ is not injective on $C^*(A)$. Thus, we have $C^*(A) \cap K(\mathcal{H}) \neq \{0\}$, and hence $K(\mathcal{H}) \subseteq C^*(A)$ thanks to the irreducibility of $C^*(A)$ (see e.g. \cite[Corollary I.10.4]{Davidson:Book}). We choose any UCP extension $\varphi$ of $\mathrm{id}_A$ to $B(\mathcal{H})$, and the desired conclusion $\varphi = \mathrm{id}$ immediately follows from a more general theorem below with $B = B(\mathcal{H})$ and $J = K(\mathcal{H})$. Here, recall that an ideal $J$ of a $C^*$-algebra $B$ is said to be \emph{essential} if $J$ has the non-zero intersection with any other non-zero ideal, or equivalently, $x J = \{0\}$ implies $x = 0$ for every $x \in B$. Note that $K(\mathcal{H})$ is a simple essential ideal of $B(\mathcal{H})$.  

\begin{theorem}\label{T} Let $B$ be a unital $C^*$-algebra. Let $\varphi \colon B \to B$ be a UCP map, and $B^\varphi$ the operator system consisting of all the fixed points under $\varphi$. Assume that there exists a simple essential ideal $J$ of $B$ such that $J \subseteq C^*(B^\varphi)$. Then the following are equivalent: 
\begin{itemize} 
\item[(i)] $\varphi = \mathrm{id}$. 
\item[(ii)] The restriction of the quotient map $\pi : B \to B/J$ to $B^\varphi$ is not completely isometric. 
\end{itemize}
\end{theorem}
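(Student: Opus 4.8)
The plan is to prove the nontrivial implication (ii) $\Rightarrow$ (i); the converse is immediate, since $\varphi = \mathrm{id}$ forces $B^\varphi = B$, whence $\pi\!\upharpoonright_{B^\varphi} = \pi$ has the nonzero kernel $J$ and so cannot be isometric, let alone completely isometric. For (ii) $\Rightarrow$ (i) I would first record a reduction: \emph{it suffices to show that $\varphi$ fixes $J$ pointwise}. Indeed, if $\varphi\!\upharpoonright_J = \mathrm{id}_J$ then every $a \in J$ lies in the multiplicative domain of $\varphi$ (as $a^*a \in J$ gives $\varphi(a^*a) = a^*a = \varphi(a)^*\varphi(a)$), so for $a \in J$, $b \in B$ one has $a\varphi(b) = \varphi(ab) = ab$, using $ab \in J$; hence $J(\varphi(b) - b) = \{0\}$, and essentiality of $J$ yields $\varphi(b) = b$. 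Thus the entire problem becomes: derive $\varphi\!\upharpoonright_J = \mathrm{id}_J$ from (ii).

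To access the fixed-point space I would pass to the enveloping von Neumann algebra $M = B^{**}$, extend $\varphi$ to the normal UCP map $\bar\varphi = \varphi^{**}$, and build the noncommutative Poisson boundary: a weak$^*$ cluster point $E$ of the Cesàro averages $\frac1n\sum_{k<n}\bar\varphi^{\,k}$ is a UCP idempotent onto $N := M^{\bar\varphi}$ with $E\bar\varphi = \bar\varphi E = E$. By the Choi--Effros theorem $N$ becomes a $C^*$-algebra under $a\circ b = E(ab)$, carrying the operator-system structure inherited from $M$, and $B^\varphi \subseteq N$. The decisive structural point is that, because $B^\varphi \subseteq N$ sits inside the multiplicative domain of $E$, pulling generators out one at a time gives $E(a_1\cdots a_k) = a_1\circ\cdots\circ a_k$ for $a_i \in B^\varphi$; hence $E$ restricts to a surjective $*$-homomorphism of $D := C^*(B^\varphi)$ onto $A := C^*_\circ(B^\varphi)$, while $E\!\upharpoonright_{B^\varphi}$, being the inclusion $B^\varphi \hookrightarrow N$ for an inherited structure, is a complete isometry.

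Next I would run a simplicity dichotomy on the ideal $I := \ker(E\!\upharpoonright_D)$ of $D$. If $J \subseteq I$, then $E\!\upharpoonright_D$ factors through $D/J$; since $E\!\upharpoonright_{B^\varphi}$ is completely isometric and factors through the completely contractive $\pi\!\upharpoonright_{B^\varphi}$, the latter would be completely isometric too, contradicting (ii). As $J$ is simple this forces $J \cap I = \{0\}$, and then essentiality of $J$ in $D$ forces $I = \{0\}$, i.e. $E$ is injective on $D$. A Schwarz-inequality squeeze (using $E\bar\varphi = E$ together with the homomorphism property on $D$) shows that $\bar\varphi(D)$ again lies in the multiplicative domain of $E$; consequently the $C^*$-algebra $D_1 := C^*(D \cup \varphi(J))$ lies in that multiplicative domain, so $E\!\upharpoonright_{D_1}$ is a $*$-homomorphism. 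For $a \in J$ the element $w := \varphi(a) - a$ lies in $D_1$ and $E(w) = E(\varphi(a)) - E(a) = 0$; injectivity of $E$ on $D_1$ then gives $w = 0$, that is $\varphi(a) = a$.

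The step I expect to be the genuine obstacle is precisely this last transfer from $M = B^{**}$ back to $B$: the projection $E$ and the products $a\circ b$ live in the bidual and need not return to $B$, so $A$ need not sit inside $D$ and one cannot naively compare $\circ$ with the original product. What makes the argument close up is that $\varphi(J) \subseteq B$ (since $\varphi$ maps $B$ into $B$), so the auxiliary algebra $D_1 = C^*(D \cup \varphi(J))$ stays inside $B$; this is exactly what lets me invoke essentiality of $J$ in $B$ to upgrade the injectivity of $E$ from $D$ to $D_1$ (via $J \cap \ker(E\!\upharpoonright_{D_1}) \subseteq \ker(E\!\upharpoonright_D) = \{0\}$), and thereby conclude $\varphi\!\upharpoonright_J = \mathrm{id}_J$ without ever requiring the Poisson boundary to land back inside $B$.
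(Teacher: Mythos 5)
Your proof is correct, and its skeleton coincides with the paper's: pass to $M=B^{**}$, take a cluster point $E$ of the Ces\`aro averages of the normal extension (the paper's $\psi$), equip the fixed-point algebra with the Choi--Effros product, note that $B^\varphi$ lies in the multiplicative domain of $E$ so that $E$ is a $*$-homomorphism on $D=C^*(B^\varphi)\supseteq J$, and run the identical simplicity dichotomy ($J\subseteq\ker$ would force $\pi$ to be completely isometric on $B^\varphi$, against (ii)). The genuine divergence is the endgame after $J\cap\ker(E\!\upharpoonright_D)=\{0\}$. The paper finishes in one stroke via the \emph{bimodule} property of the multiplicative domain: for $b\in B$ and $x\in J\subseteq\mathrm{mult}(\psi)$ one has $(b-\varphi(b))x\in J$ and $\psi\bigl((b-\varphi(b))x\bigr)=\psi(b-\varphi(b))\circ\psi(x)=0$ because $\psi\circ\varphi=\psi$; hence $(b-\varphi(b))J=\{0\}$, and essentiality gives $\varphi=\mathrm{id}$ outright --- no preliminary reduction to $\varphi\!\upharpoonright_J=\mathrm{id}_J$, no enlarged algebra $D_1=C^*(D\cup\varphi(J))$, and no essentiality upgrades of ``$J\cap\ker=\{0\}$'' to injectivity are needed. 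Your route instead makes essential use of the invariance $\bar\varphi(\mathrm{mult}(E))\subseteq\mathrm{mult}(E)$ (the paper's Claim \ref{C2}, second half, which --- curiously --- the paper proves but its own endgame never actually uses) to pull $\varphi(J)$ into the multiplicative domain, kills $\varphi(a)-a$ there by injectivity, and then bootstraps from $J$ to all of $B$ with a separate multiplicative-domain argument for $\varphi$ itself. Both finishes are valid: the paper's buys brevity, since $\psi(yx)=\psi(y)\circ\psi(x)$ holds for $x\in\mathrm{mult}(\psi)$ and \emph{arbitrary} $y\in M$, so one never needs $\varphi(b)$ or $\varphi(J)$ to lie in any multiplicative domain; yours buys the clean intermediate statement that $E$ is injective on $C^*(D\cup\varphi(J))$, i.e., an honest ``symbol map'' there. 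One small repair to your reduction: you check only $\varphi(a^*a)=\varphi(a)^*\varphi(a)$, which by Choi's theorem yields the left module relation $\varphi(ba)=\varphi(b)\varphi(a)$ rather than $\varphi(ab)=\varphi(a)\varphi(b)$; either also check $\varphi(aa^*)=\varphi(a)\varphi(a)^*$ (same argument, as $aa^*\in J$), or simply use the left relation, which gives $(\varphi(b)-b)a=\varphi(ba)-ba=0$ for all $a\in J$ and matches the paper's definition of essentiality verbatim.
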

\begin{proof}
Consider the normal extension of $\varphi$ to $M := B^{**}$, the second dual of $B$. We still denote the normal extension by the same symbol $\varphi : M \to M$. Then it is easy to see that $\psi := \lim_{m\to\omega}\frac{1}{m+1}\sum_{k=0}^{m} \varphi^k$ (in the BW-topology) becomes a UCP idempotent from $M$ onto the fixed points $M^\varphi$ such that 
\begin{align} 
&\varphi\circ\psi = \psi\circ\varphi = \psi, \label{Eq1}\\
&\psi\!\upharpoonright_{M^\varphi} = \mathrm{id}\!\upharpoonright_{M^\varphi}. \label{Eq2}
\end{align} 
(See e.g.\ \cite[Proposition 5.2]{Arveson:JFA07}.) The operator system $M^\varphi$ becomes a $C^*$-algebra (actually, a von Neumann algebra) with the Choi--Effros product $x\circ y := \psi(xy)$ for $x, y \in M^\varphi$ and the $C^*$-algebra $H^\infty(M,\varphi):=(M^\varphi,\circ)$ (which is $M^\varphi$ as an operator system but equipped with the Choi--Effros product $\circ$ instead of the original product) is called the {\it non-commutative Poisson boundary} for $\varphi: M \to M$. Remark that the identity map $M^\varphi \to H^\infty(M,\varphi)$ is a complete order isomorphism (see the proof of \cite[Theorem 15.2]{Paulsen:Book}). We denote by $\mathrm{mult}(\psi)$ the multiplicative domain of the UCP map $\psi \colon M \to H^\infty(M,\varphi)$. The multiplicative domain $\mathrm{mult}(\psi)$ is known to be a unital $C^*$-subalgebra of $M$, and it is also known that $\psi$ becomes multiplicative on it. (See e.g.\ \cite[Theorem 3.18]{Paulsen:Book}). Hence the restriction of $\psi$ to $\mathrm{mult}(\psi)$ gives a unital $*$-homomorphism from $\mathrm{mult}(\psi)$ into $H^\infty(M,\varphi)$. Here is a key (but rather simple) claim. 

\begin{claim}\label{C2} $M^\varphi \subseteq \mathrm{mult}(\psi)$ and $\varphi(\mathrm{mult}(\psi)) \subseteq \mathrm{mult}(\psi)$. 
\end{claim}

\begin{proof} We are dealing with the Choi--Effros $C^*$-algebra structure on the range $M^\varphi$ of the UCP map $\psi$. By Eq.\eqref{Eq2} we have $\psi(x^* x) = x^*\circ x = \psi(x)^*\circ\psi(x)$ for every $x \in M^\varphi$; hence $M^\varphi \subseteq \mathrm{mult}(\psi)$. For any $y \in \mathrm{mult}(\psi)$ we have $\psi(y^* y) = \psi(y)^*\circ\psi(y)$ and thus
\begin{align*}
\psi(\varphi(y)^*\varphi(y)) 
&\leq 
\psi(\varphi(y^* y)) 
\overset{\text{Eq.\eqref{Eq1}}}{=} 
\psi(y^* y) 
= 
\psi(y)^*\circ\psi(y) 
\overset{\text{Eq.\eqref{Eq1}}}{=}  
\psi(\varphi(y))^*\circ\psi(\varphi(y)) \\
&= 
\psi(\psi(\varphi(y))^*\psi(\varphi(y))) 
\leq 
\psi(\psi(\varphi(y)^*\varphi(y))) 
\overset{\psi^2 = \psi}{=}  
\psi(\varphi(y)^*\varphi(y)),
\end{align*}
where the Schwarz inequality is used twice. Hence $\psi(\varphi(y)^*\varphi(y)) = \psi(\varphi(y))^*\circ\psi(\varphi(y))$, that is, $\varphi(\mathrm{mult}(\psi)) \subset \mathrm{mult}(\psi)$.  
\end{proof}  
  
Since item (i) $\Rightarrow$ item (ii) is trivial, we have to prove only the converse implication. The restriction of $\psi \colon M \to H^\infty(M,\varphi)$ to the $C^*$-subalgebra $C:=B \cap \mathrm{mult}(\psi)$ gives a unital $*$-homomorphism $\rho : C \to H^\infty(M,\varphi)$. Moreover, observe that $J \subseteq C^*(B^\varphi) \subseteq C$ by the first part of Claim \ref{C2}. By the second part of Claim \ref{C2} we have $\varphi (C) \subseteq C$. Since $J$ is simple, $J \cap \mathrm{Ker}(\rho)$ must be $\{0\}$ or $J$.
When $J \cap \mathrm{Ker}(\rho) = J$, $\rho$ factors through $\pi$. Since $\rho(b) = \psi(b) = b$ for all $b \in B^\varphi$ by Eq.\eqref{Eq2}, $\pi$ must be completely isometric on $B^\varphi$, a contradiction to item (ii).
Thus, $J \cap \mathrm{Ker} (\rho) = \{ 0\}$.
For any $b \in B$,
we have $(b-\varphi(b))x \in J$ and moreover $\rho ((b-\varphi(b))x) = \psi(b- \varphi(b)) \rho (x) = 0$ for all $x\in J$, where we used $J \subseteq \mathrm{mult}(\psi)$ and Eq.\eqref{Eq1}.
Hence, $(b-\varphi(b))J = \{0\}$ holds for any $b \in B$.
Since $J$ is an essential ideal of $B$, we conclude that $\varphi = \mathrm{id}$.

\end{proof}

\section{Remarks} 

\subsection{Corollaries of Theorem \ref{T}} Theorem \ref{T} contains the next rigidity property for operator systems generating simple $C^*$-algebras as the particular case when $J = B$.

\begin{corollary} \label{C3} Let $B$ be a unital simple $C^*$-algebra and $\varphi : B \to B$ be a UCP map. Then, $C^*(B^\varphi) = B$ implies $\varphi = \mathrm{id}$.
\end{corollary}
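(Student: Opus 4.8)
The plan is to invoke Theorem~\ref{T} directly with the degenerate choice $J = B$, so that the corollary becomes a formal consequence once the hypotheses are checked. First I would verify that $B$ itself qualifies as a simple essential ideal of $B$. Simplicity is the standing assumption, and $B$ is visibly an ideal of itself. For essentiality, recall that an ideal $J$ is essential exactly when $xJ = \{0\}$ forces $x = 0$; with $J = B$ and $B$ unital, $xB = \{0\}$ gives $x = x\cdot 1 = 0$, so $B$ is indeed essential in itself.

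Next I would translate the inclusion hypothesis of the theorem. With $J = B$, the requirement $J \subseteq C^*(B^\varphi)$ reads $B \subseteq C^*(B^\varphi)$, which is precisely the assumption $C^*(B^\varphi) = B$ (the reverse inclusion being automatic). Hence both structural hypotheses of Theorem~\ref{T} are in force.

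The only point requiring a (very small) argument is that item~(ii) of Theorem~\ref{T} holds automatically in this situation. When $J = B$ the quotient $B/J$ is the zero $C^*$-algebra and the quotient map $\pi$ is identically $0$. Since $\varphi$ is unital we have $1 \in B^\varphi$, and $\pi(1) = 0$ while $\|1\| = 1$; thus the restriction $\pi\!\upharpoonright_{B^\varphi}$ is not even isometric, and in particular not completely isometric. Therefore item~(ii) is satisfied, and by the equivalence (i)~$\Leftrightarrow$~(ii) established in Theorem~\ref{T} we conclude item~(i), namely $\varphi = \mathrm{id}$.

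I do not anticipate a genuine obstacle here, since the whole content is absorbed by Theorem~\ref{T}. If anything, the one subtlety worth flagging is conceptual rather than technical: one must notice that a unital simple $C^*$-algebra is an essential ideal of itself, and that the collapse of the quotient to $\{0\}$ makes condition~(ii) available for free precisely because $B^\varphi$ always contains the unit. Everything else is bookkeeping.
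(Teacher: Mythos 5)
Your proposal is correct and is exactly the paper's approach: the paper states Corollary~\ref{C3} as ``the particular case when $J = B$'' of Theorem~\ref{T} without further detail, and your verification (that a unital simple $C^*$-algebra is an essential ideal of itself, and that condition~(ii) holds for free since $\pi(1)=0$ while $1 \in B^\varphi$) is precisely the bookkeeping the authors leave implicit.
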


As a corollary of the above corollary we also have:

\begin{corollary} \label{C4} If two UCP maps $\varphi_1 : B_1 \to B_2$, $\varphi_2 : B_2 \to B_1$ between unital simple $C^*$-algebras satisfy that 
both the compositions $\varphi_2\circ\varphi_1 : B_1 \to B_1$ and $\varphi_1\circ\varphi_2 : B_2 \to B_2$ trivially act on some generating sets of $B_1$ and $B_2$, respectively, then $B_1$ is $*$-isomorphic to $B_2$, or more precisely, both $\varphi_1$ and $\varphi_2$ are bijective $*$-homomorphisms with $\varphi_2 = \varphi_1^{-1}$. 
\end{corollary}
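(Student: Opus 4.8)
The plan is to reduce the statement to Corollary~\ref{C3} applied to the two composite maps, and then to extract genuine multiplicativity of the individual maps from the Kadison--Schwarz inequality. First I would note that $\varphi_2\circ\varphi_1\colon B_1\to B_1$ is UCP, being a composition of UCP maps, and that by hypothesis it fixes a generating set $G_1$ of $B_1$. Hence $G_1\subseteq B_1^{\varphi_2\circ\varphi_1}$, so that $C^*(B_1^{\varphi_2\circ\varphi_1})\supseteq C^*(G_1)=B_1$. Since $B_1$ is unital and simple, Corollary~\ref{C3} forces $\varphi_2\circ\varphi_1=\mathrm{id}_{B_1}$; by the symmetric argument $\varphi_1\circ\varphi_2=\mathrm{id}_{B_2}$. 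In particular $\varphi_1$ and $\varphi_2$ are mutually inverse bijections, and each is surjective.

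The heart of the argument is then to upgrade these mutually inverse UCP maps to $*$-homomorphisms. For a fixed $a\in B_1$ I would apply the Schwarz inequalities for $\varphi_1$ and $\varphi_2$ together with $\varphi_2\circ\varphi_1=\mathrm{id}_{B_1}$ to obtain
\begin{align*}
a^*a=\varphi_2(\varphi_1(a^*a))\ \ge\ \varphi_2(\varphi_1(a)^*\varphi_1(a))\ \ge\ \varphi_2(\varphi_1(a))^*\varphi_2(\varphi_1(a))=a^*a,
\end{align*}
and the analogous chain with $aa^*$ in place of $a^*a$. All inequalities must therefore be equalities; in particular $\varphi_2(\varphi_1(a)^*\varphi_1(a))=\varphi_2(\varphi_1(a))^*\varphi_2(\varphi_1(a))$ and its adjoint counterpart hold, which by the equality criterion for the multiplicative domain (the same theory underlying Claim~\ref{C2}) means $\varphi_1(a)\in\mathrm{mult}(\varphi_2)$.

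Since $a$ was arbitrary and $\varphi_1$ is surjective, I would conclude $\mathrm{mult}(\varphi_2)=\varphi_1(B_1)=B_2$, so that $\varphi_2$ is multiplicative on all of $B_2$ and hence a unital $*$-homomorphism; running the identical computation with the indices $1$ and $2$ exchanged shows $\varphi_1$ is a unital $*$-homomorphism as well. Combined with $\varphi_2\circ\varphi_1=\mathrm{id}_{B_1}$ and $\varphi_1\circ\varphi_2=\mathrm{id}_{B_2}$, this gives that $\varphi_1$ and $\varphi_2$ are mutually inverse $*$-isomorphisms with $\varphi_2=\varphi_1^{-1}$, so $B_1$ is $*$-isomorphic to $B_2$. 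I expect the middle step to be the main obstacle: passing from the purely order-theoretic fact that $\varphi_1$ and $\varphi_2$ are mutually inverse UCP maps to actual multiplicativity. The decisive point is that the two Schwarz inequalities, squeezed between the two appearances of $a^*a$, are forced to be equalities, which is exactly the multiplicative-domain condition, and surjectivity then promotes this from the range of $\varphi_1$ to the whole algebra.
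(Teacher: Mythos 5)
Your proposal is correct and takes essentially the same route as the paper: Corollary~\ref{C3} applied to both compositions, followed by the same two-fold Schwarz-inequality squeeze between copies of $a^*a$ to obtain multiplicative-domain membership. The only difference is cosmetic --- the paper runs the chain on unitaries $u \in B_1$ and concludes $u \in \mathrm{mult}(\varphi_1)$ (then uses that unitaries span $B_1$), while you run it on arbitrary $a \in B_1$ and conclude $\varphi_1(a) \in \mathrm{mult}(\varphi_2)$ (then use surjectivity of $\varphi_1$); if anything, your variant is marginally tidier, since it invokes Choi's equality criterion directly rather than implicitly passing through the injectivity of $\varphi_2$.
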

\begin{proof} By Corollary \ref{C3} we have $\varphi_2\circ\varphi_1 = \mathrm{id}_{B_1}$ and $\varphi_1\circ\varphi_2 = \mathrm{id}_{B_2}$. In particular, $\varphi_2 = \varphi_1^{-1}$. For every unitary $u \in B_1$ we have $1 = \varphi_2(\varphi_1(u))^*\varphi_2(\varphi_1(u)) \leq \varphi_2(\varphi_1(u)^*\varphi_1(u)) \leq \varphi_2(\varphi_1(u^* u)) = 1$, where the Schwarz inequality is used twice. Hence all the unitaries in $B_1$ fall into the multiplicative domain $\mathrm{mult}(\varphi_1)$. Hence, $\varphi_1$ is indeed a bijective $*$-homomorphism. 
\end{proof}

The reader might think that Corollary \ref{C3} should still hold without assuming that $B$ is simple. However, there is a simple counter-example as follows.
Let $H^2$ be the Hardy space of exponent $2$ over the unit circle $\mathbb{T}$, and $\xi : L^\infty:=L^\infty(\mathbb{T}) \to B(H^2)$ is the map sending each $f \in L^\infty$ to the Toeplitz operator $T_f$ with symbol $f$. Set $B = C^*(\xi(L^\infty))$ and consider the unilateral shift $s := \xi(\chi_1)$ with $\chi_1(z) = z$ for $z\in \mathbb{T}$. It is easy to see that $B(H^2) \ni x \mapsto s^* x s \in B(H^2)$ induces a UCP map $\varphi : B \to B$
and $\xi(L^\infty) \subset B^\varphi$; moreover, these two sets must coincide by Brown--Halmos' theorem. See e.g., \cite[Theorem 4.2.4]{Arveson:Book02}. 
Since $0 \neq 1-ss^* \in B$ and $\varphi(1-ss^*) = 0$, we have $\varphi \neq \mathrm{id}$. Thus, $B = C^*(B^\varphi)$ does not imply $\varphi = \mathrm{id}$ in general without the simplicity of $B$. 

\subsection{Toeplitz operators} Although some part of the discussion below seems implicitly known among specialists, we believe that it is probably worth mentioning it explicitly to make the role of non-commutative Poisson boundaries clear in the context of our discussion. We keep the notations in the last part of \S\S3.1. Observe that $J:=K(H^2)$ sits inside $B=C^*(\xi(L^\infty))$. Hence Theorem \ref{T} implies that \emph{the quotient map $\pi : B \to B/J$ is completely isometric on $B^\varphi = \xi(L^\infty)$}. This is indeed a non-trivial but known fact established in some detailed analysis on Toeplitz operators by Banach algebra technique, see \cite[section 7.15]{Douglas:Book}. Consequently, Arveson's boundary theorem (or Theorem \ref{T} given here) can be regarded as a generalization of this fact. 
Remark that $L^\infty \ni f \mapsto \xi(f) \in B^\varphi$ is known to be isometric (see \cite[Corollary 7.8]{Douglas:Book}) and UCP. Therefore, so is $\pi\circ\xi : L^\infty \to B/J$ too. On the other hand, if $M := B^{**}$ is replaced with $M := B(H^2)$ (to which the $\varphi$ here can be extended as a normal UCP map) in the proof of Theorem \ref{T}, then $M^\varphi = B^\varphi$ and hence $L^\infty \cong H^\infty(M,\varphi)$ by $f \leftrightarrow \xi(f)$ (see the proof of Corollary \ref{C4}). Thus, \emph{the map $\rho$ constructed there in this setting is nothing but the symbol map for Toeplitz operators.} The discussion so far is completely general. Namely, a part of the facts on Toeplitz operators can be generalized as follows. In the situation of the proof of Theorem \ref{T} starting with a normal UCP map $\varphi : M \to M$ instead the normal extension of a given UCP map $\varphi : B \to B$ to $B^{**}$, we have the following observation: 
\emph{\[
\xymatrix{
0 \ar[r] & 
\mathrm{Ker}(\rho) \ar[r] 
&
C^*(M^\varphi) \ar[r]^\rho & 
H^\infty(M,\varphi) \ar[r] 
& 0
}
\] 
is semisplit by the identity map}. As above, \emph{the $*$-homomorphism $\rho : C^*(M^\varphi) \to H^\infty(M,\varphi)$ should be understood as an abstract generalization of the symbol map, and the $C^*$-algebra (or more precisely the von Neumann algebra) structure $H^\infty(M,\varphi)$ gives a view of Toeplitz operators to the fixed-points $M^\varphi$.} 

\subsection{A question} Ken Davidson \cite{Davidson:letter} informed us of another short proof of Arveson's boundary theorem itself by using the existence theorem on maximal dilations for UCP maps. In closing of this note, we would like to ask: \emph{Is there any relation between the concepts of maximal dilations and non-commutative Poisson boundaries ?} 

}

\end{document}